\numberwithin{equation}{section}
\newtheorem{lemma}{Lemma}[section]
\newtheorem{theorem}{Theorem}[section]
\newtheorem{remark}{Remark}[section]
\newcommand\tbbint{{-\mkern -16mu\int}}
\newcommand\dbbint{{-\mkern -19mu\int}}
\newcommand\bbint{
{\mathchoice{\dbbint}{\tbbint}{\tbbint}{\tbbint}}
}
\renewcommand{\leq}{\leqslant}
\renewcommand{\geq}{\geqslant}
\def\R{\mathbb{R}}
\def\T{\mathbb{T}}
\def\D{\mathcal{D}}
\def\norm#1#2{\left\|#1\right\|_{#2}}
\def\set#1#2{\left\{#1 \,\middle|\, #2\right\}}
\DeclareMathOperator\dif{d\!}
\DeclareMathOperator{\curl}{curl}
\begin{document}

\title{
\bf Liouville--type Theorems for Steady MHD and Hall--MHD Equations in $\R^2 \times \T$}

\author{Wentao Hu, Zhengce Zhang\thanks{Corresponding author. Email: zhangzc@mail.xjtu.edu.cn}
\\
{\small \it School of Mathematics and Statistics, Xi'an Jiaotong University}
\\
{\small \it Xi'an, Shaanxi 710049, PR China}
}

\date{}

\maketitle

\begin{abstract}
    In this paper, we study the Liouville--type theorems for three--dimensional stationary incompressible MHD and Hall--MHD systems in a slab with periodic boundary condition. We show that, under the assumptions that $(u^\theta,b^\theta)$ or $(u^r,b^r)$ is axisymmetric, or $(ru^r,rb^r)$ is bounded, any smooth bounded solution to the MHD or Hall--MHD system with local Dirichlet integral growing as an arbitrary power function must be constant. This hugely improves the result of \cite[Theorem 1.2]{pan2021Liouville}, where the Dirichlet integral of $\mathbf{u}$ is assumed to be finite. Motivated by \cite[Bang--Gui--Wang--Xie, 2022, {\it arXiv:2205.13259}]{bang2022Liouvilletype}, our proof relies on establishing Saint--Venant's estimates associated with our problem, and the result in the current paper extends that for stationary Navier--Stokes equations shown by \cite{bang2022Liouvilletype} to MHD and Hall--MHD equations. To achieve this, more intricate estimates are needed to handle the terms involving $\mathbf{b}$ properly.

    \vspace{1em}

    \noindent {\bf AMS Subject Classification Number (2020):} 35B53, 35Q30, 35B10, 76D05, 76W05

    \vspace{1em}

    \noindent {\bf Keywords:} Liouville--type theorems; MHD equations; Hall--MHD equations; Navier--Stokes equations; Saint--Venant type estimate
\end{abstract}

\section{Introduction}
In this paper, we consider the stationary magnetohydrodynamic (MHD) equations
\begin{equation}
    \label{eq: MHD}
    \begin{dcases}
    \mathbf{u} \cdot \nabla \mathbf{u} + \nabla p - \mathbf{b} \cdot \nabla \mathbf{b} - \Delta \mathbf{u} = 0, \\
    \mathbf{u} \cdot \nabla \mathbf{b} - \mathbf{b} \cdot \nabla \mathbf{u} - \Delta \mathbf{b} = 0, \\
    \nabla \cdot \mathbf{u} = \nabla \cdot \mathbf{b} = 0,
    \end{dcases}
    \quad
    \text{in} \ \Omega \subset \R^3
\end{equation}
and the stationary Hall--MHD equations
\begin{equation}
    \label{eq: Hall-MHD}
    \begin{dcases}
    \mathbf{u} \cdot \nabla \mathbf{u} + \nabla p - \mathbf{b} \cdot \nabla \mathbf{b} - \Delta \mathbf{u} = 0, \\
    \mathbf{u} \cdot \nabla \mathbf{b} - \mathbf{b} \cdot \nabla \mathbf{u} - \Delta \mathbf{b} = \nabla \times ((\nabla \times \mathbf{b}) \times \mathbf{b}), \\
    \nabla \cdot \mathbf{u} = \nabla \cdot \mathbf{b} = 0,
    \end{dcases}
    \quad
    \text{in} \ \Omega \subset \R^3,
\end{equation}
where the unknown vector fields $\mathbf{u}=(u^1,u^2,u^3)$, $\mathbf{b}=(b^1,b^2,b^3)$ and scalar function $p$ stand for the velocity field, the magnetic field, and the pressure, respectively. The system \eqref{eq: MHD} is used to depict the steady state of incompressible charged flows under the effect of magnetic field (e.g. plasma). When $\mathbf{b} \equiv 0$, the system \eqref{eq: MHD} reduces to the stationary Navier--Stokes system
\begin{equation}
    \label{eq: NS}
    \begin{dcases}
    \mathbf{u} \cdot \nabla \mathbf{u} + \nabla p - \Delta \mathbf{u} = 0, \\
    \nabla \cdot \mathbf{u} = 0,
    \end{dcases}
    \quad
    \text{in} \ \Omega \subset \R^3,
\end{equation}
as has been intensively studied during the past decades. The Hall--MHD system \eqref{eq: Hall-MHD}, compared with \eqref{eq: MHD}, takes into account the impact of the discrepancy between electron and ion velocities in Ohm's law, which may become significant in situations like magnetic reconnection. \eqref{eq: Hall-MHD} has many important applications in the study of solar flares, neutron stars, space plasmas, etc. For more explanation on the physical background of the Hall--MHD system, we refer readers to \cite{balbus2001Linear,forbes1991Magnetic,homann2005Bifurcation,lighthill1960Studies,shalybkov1997Hall,wardle2004Star} and the references therein. The first systematic study of the Hall effects was launched by Lighthill in \cite{lighthill1960Studies}, where he physically derived the Hall term $\nabla \times ((\nabla \times \mathbf{b}) \times \mathbf{b})$ in \eqref{eq: Hall-MHD}$_2$ for the first time. The first rigorous mathematical derivation of the Hall--MHD system, however, was due to Acheritogaray et al. in \cite{acheritogaray2011Kinetic}, where they used a set of scaling limits to get their result from either two--phase flow model or kinetic model. See also \cite{srinivasan2011Analytical}. In \cite{jang2012Derivation}, Jang and Masmoudi carried out a formal derivation of the Hall effect by using the asymptotic analysis, and in \cite{peng2022Derivation}, Peng et al. derived the Hall--MHD equations from the Navier--Stokes--Maxwell equations with generalized Ohm’s law in a mathematically rigorous way via the spectral analysis and energy methods.

For our purpose in this paper, it is more convenient to study \eqref{eq: MHD} and \eqref{eq: Hall-MHD} in a joint way. To this end, we consider the following unified system with a constant $\delta \in [0,1]$:
\begin{equation}
    \label{eq: MHD_Hall-MHD}
    \begin{dcases}
    \mathbf{u} \cdot \nabla \mathbf{u} + \nabla p - \mathbf{b} \cdot \nabla \mathbf{b} - \Delta \mathbf{u} = 0, \\
    \mathbf{u} \cdot \nabla \mathbf{b} - \mathbf{b} \cdot \nabla \mathbf{u} - \Delta \mathbf{b} = \delta \nabla \times ((\nabla \times \mathbf{b}) \times \mathbf{b}), \\
    \nabla \cdot \mathbf{u} = \nabla \cdot \mathbf{b} = 0,
    \end{dcases}
    \quad
    \text{in} \ \Omega \subset \R^3.
\end{equation}
Apparently, \eqref{eq: MHD_Hall-MHD} reduces to \eqref{eq: MHD} for $\delta=0$ and \eqref{eq: Hall-MHD} for $\delta=1$.

When we consider the cylindrical coordinate $(r,\theta,z)$, which is defined as
\begin{equation*}
    x_1=r\cos\theta, \quad x_2=r\sin\theta, \quad x_3=z,
\end{equation*}
and denote the components of $\mathbf{u},\mathbf{b}$ as $\mathbf{u}=u^r e_r+u^\theta e_\theta+u^z e_z$, $\mathbf{b}=b^r e_r+b^\theta e_\theta+b^z e_z$, where $u^r,u^\theta$ and $u^z$ are called radial, swirl, and axial velocity, respectively, with
\begin{equation*}
    e_r=(\cos \theta,\sin \theta,0), \quad e_\theta=(-\sin \theta,\cos \theta,0), \quad e_z=(0,0,1),
\end{equation*}
the momentum equation \eqref{eq: MHD_Hall-MHD}$_1$ reads
\begin{equation}
    \label{eq: cylindrical}
    \begin{dcases}
    \mathbf{u} \cdot \nabla u^r-\frac{(u^\theta)^2}{r} + \frac{2}{r^2} \partial_\theta u^\theta+\partial_r p = \mathbf{b} \cdot \nabla b^r - \frac{(b^\theta)^2}{r} + \bigg( \Delta-\frac{1}{r^2} \bigg) u^r, \\
    \mathbf{u} \cdot \nabla u^\theta+\frac{u^\theta u^r}{r}-\frac{2}{r^2} \partial_\theta u^r+\frac{1}{r} \partial_\theta p = \mathbf{b} \cdot \nabla b^\theta + \frac{b^r b^\theta}{r} + \bigg( \Delta-\frac{1}{r^2} \bigg) u^\theta, \\
    \mathbf{u} \cdot \nabla u^z + \partial_z p = \mathbf{b} \cdot \nabla b^z + \Delta u^z,
    \end{dcases}
\end{equation}
and the divergence free condition \eqref{eq: MHD_Hall-MHD}$_3$ reads
\begin{equation}
    \label{eq: divu=divb=0}
    \partial_r(ru^r)+\partial_\theta u^\theta+\partial_z(ru^z) = \partial_r(rb^r)+\partial_\theta b^\theta+\partial_z(rb^z) = 0,
\end{equation}
which is helpful when some or all of the components of $\mathbf{u}$ or $\mathbf{b}$ under the cylindrical coordinate system are assumed to be axisymmetric, that is, independent of $\theta$.

We are interested in the Liouville--type theorems for \eqref{eq: MHD} and \eqref{eq: Hall-MHD}, which have attracted a lot of attention due to their key role in characterizing the asymptotic behaviors of flows near potential singularities. We refer readers to the expository article \cite{seregin2018liouville} for more about the connection between the classification of singularities and Liouville--type theorems. In retrospect, the existence of solutions to the corresponding Navier--Stokes system \eqref{eq: NS} in an exterior domain $\Omega$ satisfying
\begin{equation}
    \label{eq: DuinL2}
    \int_\Omega |\nabla \mathbf{u}|^2 \dif x < \infty
\end{equation}
was studied by Leray in his seminal paper \cite{Leray1933Etude}. A solution to \eqref{eq: NS} in an exterior domain or the whole space is called a D--solution if it satisfies \eqref{eq: DuinL2}. A longstanding open problem is whether a D--solution in the whole space must be identically 0 when it tends to zero at far field. In the two--dimensional case, the answer is positive, see \cite[Gilbarg and Weinberger, 1978]{gilbarg1978asymptotic}. Whereas in the three--dimensional case, the problem has only been partially solved. For example, in \cite{galdi2011introduction} Galdi obtained the vanishing result in the case where $\Omega=\R^3$ and $\mathbf{u} \in L^{9/2}(\R^3)$, which was later improved by a log factor in \cite{chae2016Liouville} by Chae and Wolf. Also, in \cite{chae2014liouville}, Chae proved that the associated problem admits only trivial solution if $\Delta \mathbf{u} \in L^{6/5}(\R^3)$ is assumed. This condition is stronger than \eqref{eq: DuinL2}, but has the same scaling. In \cite{seregin2016Liouville-Nonlinearity} Seregin proved the vanishing of $\mathbf{u}$ under the condition $\Omega=\R^3$ and $\mathbf{u} \in L^6(\R^3) \cap BMO^{-1}$, which was different from the one in \cite{galdi2011introduction} as functions belonging to $L^6(\R^3) \cap BMO^{-1}$ but not $L^{9/2}(\R^3)$ exist. The restriction that $\mathbf{u} \in L^6(\R^3)$ was removed later by Seregin in \cite{seregin2018Remarks}. We refer readers to \cite{seregin2016Liouville,chae2019Liouville} for other generalizations of Seregin's result. In recent years, Chamorro, Jarr\'in and Lemari\'e-Rieusset extended Galdi's result to the case where $\mathbf{u} \in L_{loc}^2(\R^3) \cap L^p(\R^3)$ ($3 \leq p \leq 9/2$) in \cite{chamorro2021Liouville}, and the gap $p \in [2,3]$ was filled by Yuan and Xiao in \cite{yuan2020Liouvilletype}. Apart from the ``integral-type'' conditions, the nonexistence of nontrivial solutions to Navier--Stokes equations under decay assumptions is also of interest to many people. For instance, in \cite{kozono2017Remark} Kozono et al. showed that a D--solution $\mathbf{u} \equiv 0$ if the vorticity $\mathbf{w}=\curl \mathbf{u}$ decays like $o(|x|^{-5/3})$.

As for the axisymmetric case, Koch et al., among other results, proved in their prominent paper \cite{koch2009liouville} that any bounded axisymmetric solution vanishes if it decays like $|x'|^{-1}$ (here and below $x'$ denotes $(x_1,x_2)$), which was improved later by Wang in \cite{wang2019Remarks} and Zhao in \cite{zhao2019Liouville} independently to the case where $|\mathbf{u}| \leq C |x'|^{-\alpha}$ or $|\curl \mathbf{u}| \leq C |x'|^{-1-\alpha}$ for any axisymmetric D--solution $\mathbf{u}$ and all $\alpha>2/3$. Also, Koch et al. proved in \cite{koch2009liouville} the Liouville--type result for axisymmetric bounded solutions with no swirl (i.e. $u^\theta=0$). Chae and Weng proved in \cite{chae-weng2016Liouville} the vanishing of axisymmetric D--solution provided $|x'| u^r \geq -1$ and $\lim_{|x|\to\infty} \mathbf{u}=0$.

There are also numerous analogous results for three--dimensional stationary MHD and Hall--MHD equations. Chae et al., among other results, established the vanishing result for \eqref{eq: Hall-MHD} in \cite{chae2014wellposedness}, where he showed that if $\mathbf{u},\mathbf{b} \in L^\infty(\R^3) \cap L^{9/2}(\R^3)$, and $\nabla \mathbf{u},\nabla \mathbf{b} \in L^2(\R^3)$, then $\mathbf{u} = \mathbf{b} \equiv 0$. The boudedeness assumption on $\mathbf{u},\mathbf{b}$ was removed by Zhang et al. in \cite{zhang2015Remarks}. Later, in \cite{chae-weng2016Liouville}, Chae and Weng proved that any solution $(\mathbf{u},\mathbf{b})$ to \eqref{eq: MHD} or \eqref{eq: Hall-MHD} that approaches to $0$ at the far field is identically zero if $\mathbf{u} \in L^3(\R^3)$ and $\nabla \mathbf{u},\nabla \mathbf{b} \in L^2(\R^3)$, and Yuan and Xiao improved this result in \cite{yuan2020Liouvilletype} by removing the condition $\nabla \mathbf{u} \in L^2(\R^3)$. Indeed, it is proved in \cite{yuan2020Liouvilletype} that the solution $(\mathbf{u},\mathbf{b})$ to \eqref{eq: Hall-MHD} vanishes if $\mathbf{u},\mathbf{b} \in L^p(\R^3)$ ($2 \leq p \leq 9/2$) and $\nabla \mathbf{b} \in L^2(\R^3)$, or if $\mathbf{u} \in L^p(\R^3)$ ($2 \leq p \leq 9/2$) and $\mathbf{b} \in L^q(\R^3)$ ($4 \leq q \leq 9/2$). Also, the vanishing of the solution $(\mathbf{u},\mathbf{b})$ to \eqref{eq: MHD} was obtained in \cite{yuan2020Liouvilletype} under the condition that $\mathbf{u},\mathbf{b} \in L^p(\R^3)$ ($2 \leq p \leq 9/2$). We remind the readers that the results of Yuan and Xiao for MHD and Hall--MHD equations also generalized the result in \cite{chae2014wellposedness}, which is mentioned at the beginning of this paragraph, and is analogous to Yuan and Xiao's result for Navier--Stokes equations, as has been listed in the foregoing text. Moreover, in \cite{schulz2019Liouville} Schulz generalized Seregin's result for Navier--Stokes equations in \cite{seregin2016Liouville-Nonlinearity} to MHD equations \eqref{eq: MHD}, proving that if $\mathbf{u},\mathbf{b} \in L^p(\R^3) \cap BMO^{-1}$ ($2<p \leq 6$), then $\mathbf{u}=\mathbf{b} \equiv 0$. The assumption that $\mathbf{u},\mathbf{b} \in L^p(\R^3)$ was later removed by Chae in \cite{chae2021Liouville-MHD}. Actually, more general result was obtained in \cite{chae2021Liouville-MHD}, that is, if there exist $\Phi,\Psi \in C^\infty(\R^3;\R^{3 \times 3})$ such that $\nabla \cdot \Phi=\mathbf{u}$, $\nabla \cdot \Psi=\mathbf{b}$, and
\begin{equation*}
    \bbint_{B_R} |\Phi-\Phi_{B_R}|^6 \dif x + \bbint_{B_R} |\Psi-\Psi_{B_R}|^6 \dif x \leq CR, \quad \forall R>1,
\end{equation*}
then $\mathbf{u} = \mathbf{b} \equiv 0$. Here both $\bbint_{B_R} f \dif x$ and $f_{B_R}$ represent the mean value of the function $f$ over the ball $B_R$. This can also be regarded as a generalization of \cite{chae2019Liouville} for Navier--Stokes equations to MHD equations.

Compared to the whole $\R^3$ case, Liouville--type results can usually be established under more relaxed conditions if we restrict the domain to be a slab $\R^2 \times [0,1]$ with no--slip boundary condition
\begin{equation*}
    \mathbf{u},\mathbf{b}=0 \quad \text{at} \ x_3=0 \ \text{or} \ 1,
\end{equation*}
or periodic boundary condition in $x_3$ (i.e. $\mathbf{u},\mathbf{b}$ are regarded to be periodic in $x_3$, and in this case, we may denote the domain by $\R^2 \times \T$). In \cite[Theorem 1.1]{carrillo2020Decay-ARMA} Carrillo et al. showed that any bounded solution to the stationary Navier--Stokes equations in the slab $\Omega=\R^2 \times [0,1]$ subject to the no--slip boundary condition is identically zero if \eqref{eq: DuinL2} is satisfied. One of the key differences from the whole space case is that the no--slip boundary problem over a slab assures the validity of Poincar\'e's inequality, so does the periodic boundary case. In \cite{carrillo2020Decay-JFA}, Carrillo et al. studied the axisymmetric stationary Navier--Stokes equations over $\Omega=\R^2 \times \T$, and proved that if $u^\theta,u^z$ have zero mean value over $\T$,
\begin{equation*}
    \int_{B_{2R \backslash R} \times \T} |\nabla \mathbf{u}|^2 \dif x \leq C, \quad \forall R>1,
\end{equation*}
where $B_{2R \backslash R}=\set{(x_1,x_2)}{R^2 \leq x_1^2+x_2^2 \leq 4R^2} \subset \R^2$, then $\mathbf{u} \equiv 0$ provided it tends to zero at the far field. This result was improved in \cite[Theorem 1.3]{carrillo2020Decay-ARMA} by removing the zero mean value condition of $u^\theta,u^z$, at the price that \eqref{eq: DuinL2} is required to be satisfied. The methods of Carrillo et al. are based on estimating the decay rate of the Green's function over $\R^2 \times \T$, and can be applied to the case where the local Dirichlet integral $\mathcal{E}(R)=\int_0^1 \int_{|x'| \leq R} |\nabla \mathbf{u}|^2 \dif x$ admits some growth with respect to $R$. Indeed, it was shown in Carrillo's PhD thesis \cite{carrilloPHD} that any axisymmetric solution $\mathbf{u}$ to \eqref{eq: NS} over $\Omega=\R^2 \times \T$ that tends to $0$ at the far field vanishes if $u^\theta,u^z$ have zero mean value over $\T$ and $\mathcal{E}(R)$ grows slower than $R^{1/5}$. There are also analogous results for stationary MHD equations in a slab. For instance, in \cite{pan2021Liouville} Pan proved, among other results, that if $u^\theta,b^\theta$ are axisymmetric and $\nabla \mathbf{u} \in L^2(\R^2 \times \T)$, then the solution $(\mathbf{u},\mathbf{b})$ to \eqref{eq: MHD} in $\R^2 \times \T$, with $\mathbf{b}$ tending to zero at the far field, is trivial. This actually generalized the result in \cite[Theorem 1.3]{carrillo2020Decay-ARMA} by relaxing the axisymmetry assumption on $\mathbf{u}$.

In \cite[Theorem 1.1]{bang2022Liouvilletype}, Bang et al. substantially improved the results of Carrillo et al., proving that an axisymmetic solution $\mathbf{u}$ to the stationary Navier--Stokes equations with no--slip boundary condition over the slab $\R^2 \times [0,1]$ is identically zero if the local Dirichlet integral $\mathcal{E}(R)$ grows at a rate slower than $R^4$ (which is equivalent to $\varliminf_{R \to \infty} R^{-4} \mathcal{E}(R)=0$), or $\sup_z |\mathbf{u}(R,z)|$ is sublinear with respect to $R$ for large $R$. When the assumption of axisymmetry is dropped, the vanishing of $\mathbf{u}$ still holds in case that there exists a number $\alpha \in (0,1)$ such that $\mathcal{E}(R)$ grows slower than $R^\alpha$, or $\sup_{\theta,z} |\mathbf{u}(R,\theta,z)|$ grows slower than $R^\beta$ and $\sup_{\theta,z} |u^r(R,\theta,z)|$ grows no faster than $R^{-1+(\alpha/2-\beta)}$ for some $\beta \in [0,\alpha/2]$, as is shown in \cite[Theorem 1.2]{bang2022Liouvilletype}. For the periodic boundary case, it was proved in \cite[Theorem 1.4]{bang2022Liouvilletype} that any bounded solution to the stationary Navier--Stokes equations over $\R^2 \times \T$ takes the form of $(0,0,c)$, where $c$ is a constant, if $u^\theta$ or $u^r$ is axisymmetric, or $ru^r \to 0$ as $r \to \infty$.

The key idea of \cite{bang2022Liouvilletype} is to establish various differential inequalities for the local Dirichlet integral of a nontrivial solution and from that to estimate the growth rate of the local Dirichlet integral. This idea dates back to the so-called Saint--Venant's principle, which was first used to establish the decay rate estimates for equations of elasticity in \cite{toupin1965SaintVenant,knowles1966SaintVenant}, and was generalized to estimate the growth of local Dirichlet integral for solutions to linear elliptic equations in \cite{oleinik1977Boundary}. It was also generalized to deal with Navier--Stokes equations in a pipe by \cite{horgan1978Spatial} and \cite{ladyzhenskaya1980Determination}, and we refer readers to \cite[Chapter XIII]{galdi2011introduction} for further information.

Inspired by \cite{bang2022Liouvilletype}, in this paper we would like to establish several Liouville--type results for stationary MHD and Hall--MHD equations over a slab with periodic boundary condition by deriving Saint--Venant's estimates associated to our problem. As will be seen in Theorem \ref{thm: Liouville}, the results of this paper hold under fairly loose conditions in the sense that the local Dirichlet integral admits arbitrary polynomial growth (and so do the gradients of the solutions, as a consequence), which is hard to be violated in the real physical world. To the best knowledge of us, existing works on the Liouville--type theorems for MHD and Hall--MHD equations have more or less made relatively tough assumptions on the local Dirichlet integral, or other types of assumptions that are equally strict to some extent. One of the results in this direction is due to \cite[Theorem 1.2]{pan2021Liouville}, which shows the Liouville--type theorem for stationary MHD equations in $\Omega=\R^2 \times \T$ with \eqref{eq: DuinL2} being assumed, apart from the conditions that $u^\theta,b^\theta$ are axisymmetric and $\mathbf{b}$ tends to $0$ at the far field. We will drop the condition of \eqref{eq: DuinL2} in Theorem \ref{thm: Liouville} (i). Also, the results in the current paper extend that of \cite[Theorem 1.4]{bang2022Liouvilletype} to stationary MHD and Hall--MHD systems, where the Lorentz force is taken into account, and the equation of magnetic field, for the latter case, is attached with the Hall term $\nabla \times ((\nabla \times \mathbf{b}) \times \mathbf{b})$ which makes the equation quasilinear. This increases the complexity of the estimation, and more careful observations are needed to deal with the terms involving $\mathbf{b}$ properly.

The main theorem of this paper is as follows.
\begin{theorem}
\label{thm: Liouville}
Let $(\mathbf{u},\mathbf{b})$ be a smooth bounded solution to \eqref{eq: MHD_Hall-MHD} in $\R^2 \times \T$ (by $\R^2 \times \T$ we mean a slab $\R^2 \times [0,1]$ with $u,b$ being periodic in $x_3$, as is mentioned above). Suppose
\begin{equation}
    \label{eq: |Du|2+|Db|2<=Ralpha}
    \int_0^1 \int_{|x'| \leq R} |\nabla \mathbf{u}|^2+|\nabla \mathbf{b}|^2 \dif x \leq C R^\alpha,
\end{equation}
where $\alpha$ is an arbitrarily selected positive number, then $\mathbf{u}$ and $\mathbf{b}$ are constant vectors provided one of the following conditions holds:

(i) $u^\theta,b^\theta$ are axisymmetric;

(ii) $u^r,b^r$ are axisymmetric;

(iii) $ru^r,rb^r$ are bounded.

Furthermore, in cases (i) and (ii), the constant vectors must take the form of $\mathbf{u}=(0,0,c_1)$, $\mathbf{b}=(0,0,c_2)$, where $c_1$ and $c_2$ are constant numbers.
\end{theorem}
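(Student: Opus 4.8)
The plan is to adapt the Saint--Venant principle developed for the Navier--Stokes case in \cite{bang2022Liouvilletype}, so I first set up the local Dirichlet integral
\[
    \mathcal{E}(R) = \int_0^1 \int_{|x'| \leq R} |\nabla \mathbf{u}|^2 + |\nabla \mathbf{b}|^2 \dif x,
\]
and aim to derive a differential inequality of the form $\mathcal{E}(R) \leq C\big(\mathcal{E}'(R)\big)^{\gamma}$ (or a comparable iterative bound) that, combined with the polynomial growth hypothesis \eqref{eq: |Du|2+|Db|2<=Ralpha}, forces $\mathcal{E} \equiv \text{const}$ and ultimately $\nabla \mathbf{u} = \nabla \mathbf{b} = 0$. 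To get there I would test the momentum equation \eqref{eq: MHD_Hall-MHD}$_1$ against $\mathbf{u}$ and the magnetic equation \eqref{eq: MHD_Hall-MHD}$_2$ against $\mathbf{b}$ over the truncated cylinder $\{|x'| \leq R\} \times \T$, using a cutoff in $r$. Integration by parts produces the Dirichlet energy on the left, while the convective and Lorentz terms $\mathbf{u} \cdot \nabla \mathbf{u}$, $\mathbf{b} \cdot \nabla \mathbf{b}$, $\mathbf{u} \cdot \nabla \mathbf{b}$, $\mathbf{b} \cdot \nabla \mathbf{u}$ become boundary flux terms on the lateral cylinder $\{|x'| = R\} \times \T$ together with pressure and interaction contributions; crucially the cross terms coming from $-\mathbf{b}\cdot\nabla\mathbf{b}$ tested against $\mathbf{u}$ and $-\mathbf{b}\cdot\nabla\mathbf{u}$ tested against $\mathbf{b}$ should cancel after integration by parts, reflecting the energy structure of MHD.

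The periodic-in-$x_3$ geometry is what makes everything work, because it supplies a Poincar\'e inequality on $\T$ after subtracting the $\T$-average, and this is how one controls the flux integrals by the energy itself. The key structural step is to handle the pressure: I would eliminate $p$ by working with the velocity directly or by estimating $\|p - \bar p\|$ on annular slabs through the equation, as in the Navier--Stokes argument, and then absorb the resulting terms into $\mathcal{E}'(R)$ via the fundamental theorem of calculus in $R$ (so that $\int_{|x'|=R}(\cdots) = \frac{d}{dR}\int_{|x'| \leq R}(\cdots)$). The three hypotheses (i), (ii), (iii) enter precisely at the point of bounding the nonlinear flux: under axisymmetry of $u^\theta, b^\theta$ or of $u^r, b^r$, or under boundedness of $r u^r, r b^r$, the most dangerous radial-transport terms acquire the decay or cancellation needed to close the inequality, exactly paralleling how \cite[Theorem 1.4]{bang2022Liouvilletype} exploits axisymmetry of $u^\theta$ or $u^r$ or the decay of $r u^r$.

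The main obstacle, as the abstract flags, will be the Hall term $\delta \nabla \times ((\nabla \times \mathbf{b}) \times \mathbf{b})$, which is quadratic in $\nabla \mathbf{b}$ and $\mathbf{b}$ and makes the magnetic equation quasilinear. When tested against $\mathbf{b}$ and integrated by parts, this produces a boundary flux on $\{|x'|=R\}\times\T$ involving $(\nabla \times \mathbf{b}) \times \mathbf{b}$ contracted with $\nabla \times \mathbf{b}$ and with $\mathbf{b}$; the interior contribution $\int (\nabla \times ((\nabla \times \mathbf{b}) \times \mathbf{b})) \cdot \mathbf{b}$ over the full period should vanish by the vector identity, leaving only the flux, but this flux is cubic in the solution and is not directly dominated by $\mathcal{E}'(R)$. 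I expect the delicate work to be bounding this term using the $L^\infty$ bound on $(\mathbf{u},\mathbf{b})$ together with Poincar\'e on $\T$ and the interpolation/trace estimates on the lateral boundary, so that it is absorbed into $\mathcal{E}'(R)$ up to a factor that the growth assumption can defeat. Once the differential inequality is established uniformly for $\delta \in [0,1]$, the Saint--Venant iteration yields $\mathbf{u}, \mathbf{b}$ constant; finally, in cases (i) and (ii) I would feed the constancy back into the divergence-free condition \eqref{eq: divu=divb=0} and the axisymmetry to conclude the radial and swirl components vanish, leaving $\mathbf{u} = (0,0,c_1)$, $\mathbf{b} = (0,0,c_2)$.
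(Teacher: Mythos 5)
Your overall strategy coincides with the paper's: a radial cutoff, an energy identity on $(B_R\backslash B_{R-1})\times\T$, a Poincar\'e inequality supplied by the periodic direction, and a Saint--Venant iteration on the local Dirichlet integral $Y(R)$. However, two points where your sketch stays vague are exactly where the real work lies, and one of your difficulty assessments is inverted. First, the pressure. You propose to ``eliminate $p$ by working with the velocity directly or by estimating $\|p-\bar p\|$ on annular slabs through the equation,'' but no concrete mechanism is given, and this is the heart of the proof. The paper's device is the Bogovski\u{\i} operator (Lemma \ref{thm: Bogovskii}): because the divergence-free condition \eqref{eq: divu=divb=0} together with the axisymmetry of $(u^\theta,b^\theta)$ or of $(u^r,b^r)$ forces $\int_0^1 u^r\,\d z=\int_0^1 b^r\,\d z=0$, one can solve $\partial_r\Psi^r+\partial_z\Psi^z=ru^r$ with $\Psi\in H^1_0$ and $\|\widetilde\nabla\Psi\|_{L^2}\leq C\|ru^r\|_{L^2}$, rewrite the flux term $\int_{O_R}(p-p_R)u^r\partial_r\varphi_R$ as $\int \nabla p\cdot\Psi$, and then substitute the momentum equation for $\nabla p$ so that $p$ never has to be estimated at all. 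This zero-mean identity is also precisely how hypotheses (i) and (ii) enter: they yield the Poincar\'e inequality $\|u^r\|_{L^2(O_R)}\leq C\|\partial_z u^r\|_{L^2(O_R)}$ \emph{without} subtracting an average (subtracting the $\T$-average of a generic component, as you suggest, would leave an uncontrolled mean), and they make the Bogovski\u{\i} problem solvable. Your proposal locates the role of (i)--(iii) in ``bounding the nonlinear flux,'' which misses this mechanism; in case (iii) the Poincar\'e gain degenerates to $\|u^r\|_{L^2(O_R)}\leq CR\|\nabla\mathbf u\|_{L^2(O_R)}$ and it is the boundedness of $ru^r,rb^r$ that restores the balance.

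Second, the Hall term is not the main obstacle you expect it to be. After testing with $\varphi_R\mathbf b$, the identity $((\nabla\times\mathbf b)\times\mathbf b)\cdot(\nabla\times\mathbf b)=0$ kills the interior contribution pointwise, and the surviving term $\int_{O_R}((\nabla\times\mathbf b)\times\mathbf b)\cdot(\nabla\varphi_R\times\mathbf b)\,\d x$ is only \emph{linear} in $\nabla\mathbf b$ and quadratic in $\mathbf b$; with $\mathbf b\in L^\infty$ it is bounded by $C\|\nabla\mathbf b\|_{L^1(O_R)}\leq CR^{1/2}\|\nabla\mathbf b\|_{L^2(O_R)}$, i.e.\ exactly the admissible $R^{1/2}[Y'(R)]^{1/2}$ form, with no trace or interpolation estimates needed. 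Finally, the conclusion of the iteration is not that $\mathcal E$ is constant: the differential inequality $Y(R)\leq C_1Y'(R)+C_2R^{1/2}[Y'(R)]^{1/2}$ is shown to force $Y$ to grow at least exponentially along a geometric sequence of radii whenever $\nabla\mathbf u$ or $\nabla\mathbf b$ is nontrivial, which contradicts the polynomial bound \eqref{eq: |Du|2+|Db|2<=Ralpha}; this step, together with the term-by-term estimates of the twelve pieces of the pressure contribution, is absent from your outline and would need to be supplied before the argument closes.
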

\begin{remark}
Case (i) of Theorem \ref{thm: Liouville} improves the result of \cite[Theorem 1.2]{pan2021Liouville} where $\nabla \mathbf{u} \in L^2(\R^2 \times \T)$ and $\lim_{|x'| \to \infty} \mathbf{b}=0$ are assumed besides the condition that $u^\theta$ and $b^\theta$ are axisymmetric to obtain the Liouville theorem.
\end{remark}

In the rest of this paper we denote $B_R=\set{(x_1,x_2)}{x_1^2+x_2^2 \leq R^2} \subset \R^2$, $\widetilde{D}_R=[R-1,R] \times [0,1]$, $\mathcal{D}_R=[R-1,R] \times [0,2\pi] \times [0,1]$, $O_R=(B_R \backslash B_{R-1}) \times \T$, and $\nabla=(\partial_r,\frac{1}{r} \partial_\theta,\partial_z)$, $\Delta=\partial_r^2+\frac{1}{r}\partial_r+\frac{1}{r^2}\partial_\theta^2+\partial_z^2$, $\widetilde{\nabla}=(\partial_r,\partial_z)$, $\overline{\nabla}=(\partial_r,\partial_\theta,\partial_z)$.
\section{Preliminary}
In this section we introduce some lemmas which are indispensable for our proof. The first lemma is from \cite{bang2022Liouvilletype} (whereas the general form is due to \cite{bogovskii1979Solution}), which is helpful in estimating the term involving $p$.
\begin{lemma}
\label{thm: Bogovskii}
For a general domain $\Omega$, denote
\begin{equation*}
    L_\sigma^2(\Omega):=\set{f \in L^2(\Omega)}{\int_{\Omega} f \dif x=0}.
\end{equation*}

(i) There exists a linear operator $\Phi$ that maps every $f \in L_\sigma^2(\widetilde{D}_R)$ into a vector field $V=\Phi f \in H_0^1(\widetilde{D}_R;\R^2)$, such that
\begin{equation*}
    \partial_r V^r+\partial_z V^z=f \quad \text{in} \ \widetilde{D}_R, \quad \text{and} \ \|\widetilde{\nabla} V\|_{L^2(\widetilde{D}_R)} \leq C \norm{f}{L^2(\widetilde{D}_R)}.
\end{equation*}

(ii) There exists a linear operator $\Phi$ that maps every $f \in L_\sigma^2(\D_R)$ into a vector field $V=\Phi f \in H_0^1(\D_R;\R^3)$, such that
\begin{equation*}
    \partial_r V^r+\partial_\theta V^\theta+\partial_z V^z=f \quad \text{in} \ \D_R, \quad \text{and} \ \|\overline{\nabla} V\|_{L^2(\D_R)} \leq C \norm{f}{L^2(\D_R)}.
\end{equation*}

In both cases $C$ represents a positive constant independent of $R$, and $\Phi$ is termed a Bogovski\u{i} map.
\end{lemma}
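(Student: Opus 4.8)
The plan is to reduce both assertions to the classical Bogovski\u{i} lemma on a fixed bounded Lipschitz domain, and to extract the crucial $R$-independence of the constant $C$ from the translation invariance of the domains involved. The first observation is that, treating $(r,z)$ (respectively $(r,\theta,z)$) as Cartesian variables, the expressions $\partial_r V^r+\partial_z V^z$ and $\partial_r V^r+\partial_\theta V^\theta+\partial_z V^z$ are ordinary flat divergences, and the $L^2$ norms are taken with respect to flat Lebesgue measure; accordingly the cylindrical geometry plays no role and the problem is purely Euclidean. The second observation is that the domains are rigid translates of fixed reference domains: with $Q_2:=[0,1]\times[0,1]$ and $Q_3:=[0,1]\times[0,2\pi]\times[0,1]$, the translation $\tau_R(y):=y+(R-1)e_1$, shifting only the first coordinate, maps $Q_2$ onto $\widetilde{D}_R$ and $Q_3$ onto $\D_R$. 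Since $Q_2$ and $Q_3$ are convex, hence star-shaped with respect to an interior ball and Lipschitz, the classical Bogovski\u{i} construction (see \cite{bogovskii1979Solution} or \cite[Chapter III]{galdi2011introduction}) furnishes bounded linear operators $\Phi_0$ on $L_\sigma^2(Q_2)$ and $L_\sigma^2(Q_3)$ solving the respective divergence equations in $H_0^1$, with operator norms $C_2,C_3$ depending only on the fixed geometry of $Q_2,Q_3$.

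I would then transplant $\Phi_0$ to $\widetilde{D}_R$ and $\D_R$ by conjugation with $\tau_R$. For case (i), given $f\in L_\sigma^2(\widetilde{D}_R)$, set $g:=f\circ\tau_R$; since $\tau_R$ has unit Jacobian, $g\in L^2(Q_2)$ with $\norm{g}{L^2(Q_2)}=\norm{f}{L^2(\widetilde{D}_R)}$ and $\int_{Q_2} g\,\dif x=\int_{\widetilde{D}_R} f\,\dif x=0$, so $g\in L_\sigma^2(Q_2)$. Put $W:=\Phi_0 g\in H_0^1(Q_2;\R^2)$ and define $V:=W\circ\tau_R^{-1}$. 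Because a translation commutes with the flat divergence and with the gradient, one gets $V\in H_0^1(\widetilde{D}_R;\R^2)$ and $\partial_r V^r+\partial_z V^z=(\partial_r W^r+\partial_z W^z)\circ\tau_R^{-1}=g\circ\tau_R^{-1}=f$, while the unit-Jacobian change of variables yields $\norm{\widetilde{\nabla} V}{L^2(\widetilde{D}_R)}=\norm{\widetilde{\nabla} W}{L^2(Q_2)}\leq C_2\norm{g}{L^2(Q_2)}=C_2\norm{f}{L^2(\widetilde{D}_R)}$. Taking $\Phi f:=\left(\Phi_0(f\circ\tau_R)\right)\circ\tau_R^{-1}$ gives the desired linear operator with constant $C=C_2$ independent of $R$. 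Case (ii) is verbatim identical, working with $Q_3$, the three-dimensional divergence, and $\overline{\nabla}$ in place of $\widetilde{\nabla}$, producing the constant $C=C_3$.

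The only substantive point—and the reason the lemma is stated with the emphasis that $C$ is independent of $R$—is precisely this uniformity. A priori the Bogovski\u{i} constant degenerates with the diameter and Lipschitz character of the domain, so that one might fear blow-up as $R\to\infty$; the resolution is that all the slices $\widetilde{D}_R$ (and all the $\D_R$) are mutually congruent, differing only by a rigid translation that preserves the divergence operator and the $L^2$ norms of functions and of their gradients. Hence no degeneration occurs and the single reference constant serves for every $R$. There is thus no genuine analytic obstacle beyond the classical existence theorem; the entire content lies in recording the flat Cartesian reading of the operators and in the translation-invariance argument that delivers the $R$-free bound.
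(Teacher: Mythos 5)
Your argument is correct. The paper itself offers no proof of this lemma --- it is simply quoted from \cite{bang2022Liouvilletype} with the general form attributed to \cite{bogovskii1979Solution} --- so there is nothing to compare against beyond the citation; your reduction to the classical Bogovski\u{\i} operator on the fixed reference boxes $[0,1]^2$ and $[0,1]\times[0,2\pi]\times[0,1]$, followed by conjugation with the translation $\tau_R$, is exactly the standard way to justify it, and you correctly identify the two points that actually need saying: the operators and norms are the flat Euclidean ones in the $(r,z)$, resp.\ $(r,\theta,z)$, variables (which is consistent with how the lemma is invoked later, e.g.\ in \eqref{eq: DV_estimate}), and the $R$-independence of $C$ comes from the mutual congruence of the domains $\widetilde{D}_R$ (resp.\ $\D_R$) under translations that commute with the divergence and preserve the $L^2$ norms.
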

The second lemma is about the Poincar\'e's inequalities that fit in with $\mathbf{u},\mathbf{b}$.
\begin{lemma}
\label{thm: Poincare}
The following inequalities hold if either $(u^\theta,b^\theta)$ or $(u^r,b^r)$ is axisymmetric:
\begin{equation}
    \label{eq: Poincare}
    \norm{u^r}{L^2(O_R)} \leq C \norm{\partial_z u^r}{L^2(O_R)}, \quad \norm{b^r}{L^2(O_R)} \leq C \norm{\partial_z b^r}{L^2(O_R)}.
\end{equation}
\end{lemma}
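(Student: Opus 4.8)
The plan is to reduce both inequalities in \eqref{eq: Poincare} to the one–dimensional Poincaré–Wirtinger inequality on the period $\T=[0,1]$. On the periodic interval one has $\norm{f}{L^2(0,1)} \leq C \norm{\partial_z f}{L^2(0,1)}$ for every $f$ with $\int_0^1 f \, \d z = 0$; integrating the square of this over the annulus $B_R \setminus B_{R-1}$ in the $(r,\theta)$ variables produces exactly the desired bounds, \emph{provided} the $z$–averages $\int_0^1 u^r \, \d z$ and $\int_0^1 b^r \, \d z$ vanish identically. Thus the entire content of the lemma is the mean–zero property of the radial components, which I would extract from the divergence–free condition \eqref{eq: divu=divb=0} together with the axisymmetry hypotheses.

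To this end I would average \eqref{eq: divu=divb=0} over one period in $z$. Writing $\bar u^r(r,\theta)=\int_0^1 u^r \, \d z$ and $\bar u^\theta(r,\theta)=\int_0^1 u^\theta \, \d z$, the term $\int_0^1 \partial_z(r u^z) \, \d z$ drops out by periodicity in $z$, leaving
\begin{equation*}
    \partial_r(r \bar u^r) + \partial_\theta \bar u^\theta = 0.
\end{equation*}
In case (i), $u^\theta$ is axisymmetric, so $\bar u^\theta$ is independent of $\theta$ and the second term vanishes, giving $\partial_r(r\bar u^r)=0$; hence $r\bar u^r = h(\theta)$ for some function $h$ of $\theta$ alone. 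In case (ii), $u^r$ is axisymmetric, so $\bar u^r$ is $\theta$–independent; integrating the averaged identity over $\theta \in [0,2\pi]$ and using the $2\pi$–periodicity of $\bar u^\theta$ to kill $\int_0^{2\pi} \partial_\theta \bar u^\theta \, \d\theta$ yields $2\pi\,\partial_r(r\bar u^r)=0$, so $r\bar u^r$ is constant in $r$ and, being $\theta$–independent, is a genuine constant.

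In either case $r\bar u^r$ is bounded on $B_R \setminus B_{R-1}$ by the assumed boundedness of $\mathbf u$; to force it to vanish I would invoke the regularity of the solution across the axis. Since $\mathbf u$ is smooth and bounded on the whole slab $\R^2 \times \T$, the component $u^r$, and hence $\bar u^r$, stays bounded as $r \to 0^+$, which is incompatible with $\bar u^r = h(\theta)/r$ in case (i) (resp.\ with $r\bar u^r$ equal to a nonzero constant in case (ii)) unless $h \equiv 0$ (resp.\ the constant is $0$). Therefore $\bar u^r \equiv 0$, i.e.\ $\int_0^1 u^r \, \d z = 0$, and the Poincaré–Wirtinger step gives the first inequality in \eqref{eq: Poincare}. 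The estimate for $b^r$ is obtained verbatim, using the divergence–free condition for $\mathbf b$ in \eqref{eq: divu=divb=0} and the fact that $b^\theta$ (resp.\ $b^r$) is axisymmetric under the same hypothesis.

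The one delicate point, and the step I expect to carry the real weight, is the passage from ``$r\bar u^r$ depends only on $\theta$ (or is constant)'' to $\bar u^r \equiv 0$: it relies on the field being genuinely defined and bounded across the degenerate axis $r=0$ of the cylindrical frame, rather than on any far–field decay. This is precisely what allows us to dispense with the decay assumption $\lim_{|x'|\to\infty}\mathbf b = 0$ imposed in \cite{pan2021Liouville}. Everything else is the elementary periodic Poincaré inequality and the bookkeeping of the averaged divergence relation.
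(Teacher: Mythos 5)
Your proof is correct and follows essentially the same route as the paper: average the divergence-free condition \eqref{eq: divu=divb=0} over a period in $z$, use the axisymmetry hypothesis (directly in case (i), after integrating in $\theta$ in case (ii)) to kill the $\partial_\theta$ term, integrate in $r$ to get $\int_0^1 u^r\,\d z=\int_0^1 b^r\,\d z=0$, and conclude with the one-dimensional Poincar\'e--Wirtinger inequality. The only difference is expository: you make explicit that the constant of integration $r\bar u^r$ is fixed to zero by boundedness across the axis $r=0$, a step the paper leaves implicit in the phrase ``after being integrated with respect to $r$.''
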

\begin{proof}
Integrating \eqref{eq: divu=divb=0} with respect to $z$ over $[0,1]$, we obtain
\begin{equation}
    \label{eq: int(div=0)dz}
    \int_0^1 \partial_r(ru^r)+\partial_\theta u^\theta \dif z = \int_0^1 \partial_r(rb^r)+\partial_\theta b^\theta \dif z = 0.
\end{equation}
When $u^\theta,b^\theta$ are axisymmetric, i.e. $\partial_\theta u^\theta=\partial_\theta b^\theta=0$, the above equation, after being integrated with respect to $r$, gives
\begin{equation}
    \label{eq: urbrmean=0}
    \int_0^1 u^r \dif z = \int_0^1 b^r \dif z = 0.
\end{equation}
When $u^r$ and $b^r$ are axisymmetric, integrating \eqref{eq: int(div=0)dz} with respect to $\theta$ over $[0,2\pi]$ gives
\begin{equation*}
    2\pi \int_0^1 \partial_r(ru^r) \dif z = 2\pi \int_0^1 \partial_r(rb^r) \dif z = 0,
\end{equation*}
which also brings us to \eqref{eq: urbrmean=0} after being integrated with respect to $r$. Therefore we arrive at \eqref{eq: Poincare} by using the usual Poincar\'e's inequality
\begin{equation*}
    \int_0^1 \bigg| u^r-\bbint_0^1 u^r \dif z \bigg|^2 \dif z \leq C \int_0^1 |\partial_z u^r|^2 \dif z, \quad \int_0^1 \bigg| b^r-\bbint_0^1 b^r \dif z \bigg|^2 \dif z \leq C \int_0^1 |\partial_z b^r|^2 \dif z.
\end{equation*}
\end{proof}

\section{Proof of Theorem \ref{thm: Liouville}}
\label{sec: proof}
For $R>1$ large enough, define the axisymmetric cut-off function
\begin{equation}
    \varphi_R(r)=
    \begin{dcases}
    1, \quad 0 \leq r<R-1, \\
    R-r, \quad R-1 \leq r \leq R, \\
    0, \quad r>R.
    \end{dcases}
\end{equation}
Test \eqref{eq: MHD_Hall-MHD}$_1$ and \eqref{eq: MHD_Hall-MHD}$_2$ by $\varphi_R \mathbf{u}$ and $\varphi_R \mathbf{b}$ respectively and integrate them by parts over $\R^2 \times \T$ and then add them up, noting that\footnote{Here we've used the formula $\int_\Omega \mathbf{a} \cdot (\nabla \times \mathbf{b}) \dif x - \int_\Omega \mathbf{b} \cdot (\nabla \times \mathbf{a}) \dif x = \int_{\partial \Omega} (\mathbf{b} \times \mathbf{a}) \cdot \mathbf{n} \dif S$ for general 3-dimensional vector fields $\mathbf{a},\mathbf{b}$ and domain $\Omega \subset \R^3$.}
\begin{equation*}
    \begin{split}
    \int_{\R^2 \times \T} \nabla \times ((\nabla \times \mathbf{b}) \times \mathbf{b}) \cdot \mathbf{b} \varphi_R \dif x =& \int_{B_R \times \T} ((\nabla \times \mathbf{b}) \times \mathbf{b}) \cdot (\nabla \times (\varphi_R \mathbf{b})) \dif x \\
    =& \int_{B_R \times \T} ((\nabla \times \mathbf{b}) \times \mathbf{b}) \cdot (\varphi_R \nabla \times \mathbf{b} + \nabla \varphi_R \times \mathbf{b}) \dif x \\
    =& \int_{O_R} ((\nabla \times \mathbf{b}) \times \mathbf{b}) \cdot (\nabla \varphi_R \times \mathbf{b}) \dif x,
    \end{split}
\end{equation*}
therefore we obtain
\begin{align}
    &\int_{\R^2 \times \T} (|\nabla \mathbf{u}|^2+|\nabla \mathbf{b}|^2) \varphi_R \dif x \notag \\
    =& -\int_{O_R} \partial_r \varphi_R \partial_r \mathbf{u} \cdot \mathbf{u} \dif x + \frac{1}{2} \int_{O_R} (|\mathbf{u}|^2+|\mathbf{b}|^2) u^r \partial_r \varphi_R \dif x \notag \\
    &+ \int_{O_R} (p-p_R) u^r \partial_r \varphi_R \dif x - \int_{O_R} (\mathbf{u} \cdot \mathbf{b}) b^r \partial_r \varphi_R \dif x \notag \\
    &- \int_{O_R} \partial_r \varphi_R \partial_r \mathbf{b} \cdot \mathbf{b} \dif x + \delta \int_{O_R} ((\nabla \times \mathbf{b}) \times \mathbf{b}) \cdot (\nabla \varphi_R \times \mathbf{b}) \dif x \notag \\
    &=: I_1+I_2+I_3+I_4+I_5+I_6, \label{eq: |Du+Db|L2}
\end{align}
where $p_R=\frac{1}{|O_R|} \int_{O_R} p \dif x$ denotes the mean value of $p$ over $O_R$. The reason to discuss $p-p_R$ rather than $p$ is that $\nabla p(x',x_3+1)-\nabla p(x',x_3)=0$ according to \eqref{eq: MHD_Hall-MHD}$_1$ and the periodicity of $\mathbf{u},\mathbf{b}$ along the $x_3$--axis, which implies that $p(x',x_3+1)-p(x',x_3)=c$, where $c$ is a constant. Integrate it over $O_R$ with respect to $(x',x_3)$, we obtain $\int_1^2 \int_{B_R \backslash B_{R-1}} p(x',x_3+1) \dif x' \mathrm{d} (x_3+1)-\int_0^1 \int_{B_R \backslash B_{R-1}} p(x',x_3) \dif x' \dif x_3=c |O_R|$. Therefore $p-p_R$ is periodic with respect to $x_3$, and thus the boundary term vanishes when integrating the term involving $p-p_R$ by parts.
\subsection{Proof for Case (i) of Theorem \ref{thm: Liouville}}
\label{subsec: case1}
According to Lemma \ref{thm: Bogovskii} and \eqref{eq: urbrmean=0}, for every fixed $\theta \in [0,2\pi]$, there exists $\Psi_{R,\theta} \in H_0^1(\widetilde{D}_R;\R^2)$, such that
\begin{equation}
    \label{eq: divV=rur}
    \partial_r \Psi_{R,\theta}^r+\partial_z \Psi_{R,\theta}^z = r u^r \ \text{in} \ \widetilde{D}_R, \quad \text{and} \ \norm{\partial_r \Psi_{R,\theta}}{L^2(\widetilde{D}_R)} + \norm{\partial_z \Psi_{R,\theta}}{L^2(\widetilde{D}_R)} \leq C \norm{r u^r}{L^2(\widetilde{D}_R)},
\end{equation}
where $C$ is independent of $R$ and $\theta$. On the other hand, differentiating \eqref{eq: urbrmean=0} and \eqref{eq: divV=rur} with respect to $\theta$ leads to $\int_0^1 \partial_\theta u^r \dif z=0$ and
\begin{equation*}
    \partial_r \partial_\theta \Psi_{R,\theta}^r+\partial_z \partial_\theta \Psi_{R,\theta}^z=r\partial_\theta u^r \quad \text{in} \ \widetilde{D}_R,
\end{equation*}
respectively, which implies that $\partial_\theta \Psi_{R,\theta}$ is the image of $r\partial_\theta u^r$ under the Bogovski\u{i} map, and therefore
\begin{equation}
    \label{eq: divV'=rur'}
    \norm{\partial_r \partial_\theta \Psi_{R,\theta}}{L^2(\widetilde{D}_R)} + \norm{\partial_z \partial_\theta \Psi_{R,\theta}}{L^2(\widetilde{D}_R)} \leq C \norm{r \partial_\theta u^r}{L^2(\widetilde{D}_R)},
\end{equation}
where $C$ is independent of $R,\theta$. \eqref{eq: divV=rur}$_2$, \eqref{eq: divV'=rur'} combined with \eqref{eq: Poincare} then imply that
\begin{align}
    &\norm{\partial_r \Psi_{R,\theta}}{L^2(\D_R)} + \norm{\partial_z \Psi_{R,\theta}}{L^2(\D_R)} \notag \\
    \leq& C \norm{r u^r}{L^2(\D_R)} \leq C R^{1/2} \norm{u^r}{L^2(O_R)} \leq C R^{1/2} \norm{\nabla \mathbf{u}}{L^2(O_R)}, \label{eq: DV_estimate}
\end{align}
and
\begin{align}
    &\norm{\partial_r \partial_\theta \Psi_{R,\theta}}{L^2(\D_R)} + \norm{\partial_z \partial_\theta \Psi_{R,\theta}}{L^2(\D_R)} \notag \\
    \leq& C \norm{r \partial_\theta u^r}{L^2(\D_R)} \leq C R^{3/2} \norm{r^{-1} \partial_\theta u^r}{L^2(O_R)} \leq C R^{3/2} \norm{\nabla \mathbf{u}}{L^2(O_R)}. \label{eq: DV'_estimate}
\end{align}

According to \eqref{eq: divV=rur}$_1$ and the periodicity of $p-p_R$ along the $z$--axis, we have
\begin{align}
    I_3 =& \int_{O_R} (p-p_R) u^r \partial_r \varphi_R \dif x = -\int_0^1 \int_0^{2\pi} \int_{R-1}^R (p-p_R) r u^r \dif r \dif \theta \dif z \notag \\
    =& -\int_0^1 \int_0^{2\pi} \int_{R-1}^R (p-p_R) (\partial_r \Psi_{R,\theta}^r+\partial_z \Psi_{R,\theta}^z) \dif r \dif \theta \dif z \notag \\
    =& \int_0^1 \int_0^{2\pi} \int_{R-1}^R (\partial_r p \Psi_{R,\theta}^r+\partial_z p \Psi_{R,\theta}^z) \dif r \dif \theta \dif z =: I_{31}+I_{32}. \label{eq: I3}
\end{align}
The right hand side of \eqref{eq: I3} can be replaced by the terms involving $\mathbf{u},\mathbf{b}$ through \eqref{eq: cylindrical}:
\begin{align}
    I_{31} =& -\int_0^1 \int_0^{2\pi} \int_{R-1}^R \big( \partial_r u^r \partial_r \Psi_{R,\theta}^r + \partial_z u^r \partial_z \Psi_{R,\theta}^r \big) \dif r \dif \theta \dif z \notag \\
    &+ \int_0^1 \int_0^{2\pi} \int_{R-1}^R \frac{1}{r^2} \partial_\theta u^r \partial_\theta \Psi_{R,\theta}^r \dif r \dif \theta \dif z \notag \\
    &+ \int_0^1 \int_0^{2\pi} \int_{R-1}^R \frac{1}{r} \partial_r u^r \Psi_{R,\theta}^r \dif r \dif \theta \dif z - \int_0^1 \int_0^{2\pi} \int_{R-1}^R \frac{1}{r^2} u^r \Psi_{R,\theta}^r \dif r \dif \theta \dif z \notag \\
    &+ \int_0^1 \int_0^{2\pi} \int_{R-1}^R \frac{(u^\theta)^2-(b^\theta)^2}{r} \Psi_{R,\theta}^r \dif r \dif \theta \dif z \notag \\
    &- \int_0^1 \int_0^{2\pi} \int_{R-1}^R \bigg( u^r \partial_r + \frac{u^\theta}{r} \partial_\theta + u^z \partial_z \bigg) u^r \Psi_{R,\theta}^r \dif r \dif \theta \dif z \notag \\
    &+ \int_0^1 \int_0^{2\pi} \int_{R-1}^R \bigg( b^r \partial_r + \frac{b^\theta}{r} \partial_\theta + b^z \partial_z \bigg) b^r \Psi_{R,\theta}^r \dif r \dif \theta \dif z =: \sum_{i=1}^7 I_{31i}, \label{eq: dp/drVr} \\
    I_{32} =& -\int_0^1 \int_0^{2\pi} \int_{R-1}^R \big( \partial_r u^z \partial_r \Psi_{R,\theta}^z + \partial_z u^z \partial_z \Psi_{R,\theta}^z \big) \dif r \dif \theta \dif z \notag \\
    &+ \int_0^1 \int_0^{2\pi} \int_{R-1}^R \frac{1}{r^2} \partial_\theta u^z \partial_\theta \Psi_{R,\theta}^z \dif r \dif \theta \dif z + \int_0^1 \int_0^{2\pi} \int_{R-1}^R \frac{1}{r} \partial_r u^z \Psi_{R,\theta}^z \dif r \dif \theta \dif z \notag \\
    &- \int_0^1 \int_0^{2\pi} \int_{R-1}^R \bigg( u^r \partial_r + \frac{u^\theta}{r} \partial_\theta + u^z \partial_z \bigg) u^z \Psi_{R,\theta}^z \dif r \dif \theta \dif z \notag \\
    &+ \int_0^1 \int_0^{2\pi} \int_{R-1}^R \bigg( b^r \partial_r + \frac{b^\theta}{r} \partial_\theta + b^z \partial_z \bigg) b^z \Psi_{R,\theta}^z \dif r \dif \theta \dif z =: \sum_{i=1}^5 I_{32i}. \label{eq: dp/dzVz}
\end{align}

We now estimate $I_{311}$--$I_{317}$ and $I_{321}$--$I_{325}$ carefully. It follows from H\"older's inequality and \eqref{eq: DV_estimate} that
\begin{align}
    |I_{311}| \leq& C R^{-1/2} \norm{\nabla \mathbf{u}}{L^2(O_R)} \cdot R^{1/2} \norm{u^r}{L^2(O_R)} \notag \\
    \leq& C R^{1/2} \norm{\nabla \mathbf{u}}{L^2(O_R)} \norm{u^r}{L^\infty(O_R)} \leq C R^{1/2} \norm{\nabla \mathbf{u}}{L^2(O_R)}, \label{eq: dp/drVr_1-1}
\end{align}
and it holds by H\"older's inequality, Poincar\'e's inequality and \eqref{eq: DV'_estimate} that
\begin{align}
    |I_{312}| \leq& C R^{-3/2} \norm{r^{-1} \partial_\theta u^r}{L^2(O_R)} \norm{\partial_\theta \Psi_{R,\theta}^r}{L^2(\D_R)} \notag \\
    \leq& C R^{-3/2} \norm{\nabla \mathbf{u}}{L^2(O_R)} \norm{\partial_z \partial_\theta \Psi_{R,\theta}^r}{L^2(\D_R)} \notag \\
    \leq& C R^{-3/2} \norm{\nabla \mathbf{u}}{L^2(O_R)} \cdot R^{3/2} \norm{\nabla \mathbf{u}}{L^2(O_R)} = C \norm{\nabla \mathbf{u}}{L^2(O_R)}^2. \label{eq: dp/drVr_1-2}
\end{align}
Similarly, from \eqref{eq: DV_estimate} and \eqref{eq: Poincare} one can show that
\begin{align}
    |I_{313}| \leq& C R^{-3/2} \norm{\partial_r u^r}{L^2(O_R)} \norm{\Psi_{R,\theta}^r}{L^2(\D_R)} \notag \\
    \leq& C R^{-3/2} \norm{\nabla \mathbf{u}}{L^2(O_R)} \norm{\partial_z \Psi_{R,\theta}^r}{L^2(\D_R)} \notag \\
    \leq& C R^{-3/2} \norm{\nabla \mathbf{u}}{L^2(O_R)} \cdot R^{1/2} \norm{u^r}{L^2(O_R)} \notag \\
    \leq& C R^{-3/2} \norm{\nabla \mathbf{u}}{L^2(O_R)} \cdot R \norm{u^r}{L^\infty(O_R)} \leq C R^{-1/2} \norm{\nabla \mathbf{u}}{L^2(O_R)}, \label{eq: dp/drVr_2-1} \\
    |I_{314}| \leq& C R^{-5/2} \norm{u^r}{L^2(O_R)} \norm{\Psi_{R,\theta}^r}{L^2(\D_R)} \notag \\
    \leq& C R^{-5/2} \norm{\partial_z u^r}{L^2(O_R)} \norm{\partial_z \Psi_{R,\theta}^r}{L^2(\D_R)} \notag \\
    \leq& C R^{-5/2} \norm{\partial_z u^r}{L^2(O_R)} \cdot R \norm{u^r}{L^\infty(O_R)} \leq C R^{-3/2} \norm{\nabla \mathbf{u}}{L^2(O_R)}, \label{eq: dp/drVr_2-2} \\
    |I_{315}| \leq& C R^{-1} \big\|(u^\theta,b^\theta)\big\|_{L^\infty(O_R)}^2 \cdot R^{1/2} \norm{\Psi_{R,\theta}^r}{L^2(\D_R)} \notag \\
    \leq& C R^{-1/2} \big\|(u^\theta,b^\theta)\big\|_{L^\infty(O_R)}^2 \norm{\partial_z \Psi_{R,\theta}^r}{L^2(\D_R)} \notag \\
    \leq& C R^{-1/2} \big\|(u^\theta,b^\theta)\big\|_{L^\infty(O_R)}^2 \cdot R^{1/2} \norm{\nabla \mathbf{u}}{L^2(O_R)} \leq C \norm{\nabla \mathbf{u}}{L^2(O_R)}, \label{eq: dp/drVr_2-3}
\end{align}
and
\begin{align}
    |I_{316}| \leq& C \norm{\mathbf{u}}{L^\infty(O_R)} \cdot R^{-1/2} \norm{\nabla \mathbf{u}}{L^2(O_R)} \norm{\Psi_{R,\theta}^r}{L^2(\D_R)} \notag \\
    \leq& C \norm{\mathbf{u}}{L^\infty(O_R)} \cdot R^{-1/2} \norm{\nabla \mathbf{u}}{L^2(O_R)} \norm{\partial_z \Psi_{R,\theta}^r}{L^2(\D_R)} \notag \\
    \leq& C \norm{\mathbf{u}}{L^\infty(O_R)} \cdot R^{-1/2} \norm{\nabla \mathbf{u}}{L^2(O_R)} \cdot R \norm{u^r}{L^\infty(O_R)} \leq C R^{1/2} \norm{\nabla \mathbf{u}}{L^2(O_R)}, \label{eq: dp/drVr_3}
\end{align}
and likewise,
\begin{equation}
    \label{eq: dp/drVr_4}
    |I_{317}| \leq C R^{1/2} \norm{\nabla \mathbf{b}}{L^2(O_R)}.
\end{equation}
Combine \eqref{eq: dp/drVr} and \eqref{eq: dp/drVr_1-1}--\eqref{eq: dp/drVr_4}, we conclude
\begin{equation}
    \label{eq: dp/drVr_estimate}
    |I_{31}| \leq C R^{1/2} \norm{(\nabla \mathbf{u},\nabla \mathbf{b})}{L^2(O_R)} + C \norm{\nabla \mathbf{u}}{L^2(O_R)}^2.
\end{equation}

Analogously, it can be shown that
\begin{gather*}
    |I_{321}|,|I_{324}| \leq C R^{1/2} \norm{\nabla \mathbf{u}}{L^2(O_R)}, \quad |I_{322}| \leq C \norm{\nabla \mathbf{u}}{L^2(O_R)}^2, \\
    |I_{323}| \leq C R^{-1/2} \norm{\nabla \mathbf{u}}{L^2(O_R)}, \quad |I_{325}| \leq C R^{1/2} \norm{\nabla \mathbf{b}}{L^2(O_R)},
\end{gather*}
which, combined with \eqref{eq: dp/dzVz}, leads to
\begin{equation}
    \label{eq: dp/dzVz_estimate}
    |I_{32}| \leq C R^{1/2} \norm{(\nabla \mathbf{u},\nabla \mathbf{b})}{L^2(O_R)} + C \norm{\nabla \mathbf{u}}{L^2(O_R)}^2.
\end{equation}

Putting \eqref{eq: dp/drVr_estimate}, \eqref{eq: dp/dzVz_estimate} into \eqref{eq: I3} one obtains
\begin{equation}
    \label{eq: I3_estimate}
    |I_3| \leq C R^{1/2} \norm{(\nabla \mathbf{u},\nabla \mathbf{b})}{L^2(O_R)} + C \norm{\nabla \mathbf{u}}{L^2(O_R)}^2.
\end{equation}

Furthermore, applying H\"older's inequality one can deduce that
\begin{align}
    |I_1| \leq& C \norm{\nabla \mathbf{u}}{L^2(O_R)} \norm{\mathbf{u}}{L^2(O_R)} \notag \\
    \leq& C \norm{\nabla \mathbf{u}}{L^2(O_R)} \cdot R^{1/2} \norm{\mathbf{u}}{L^\infty(O_R)} \leq C R^{1/2} \norm{\nabla \mathbf{u}}{L^2(O_R)}, \label{eq: I1_estimate} \\
    |I_5| \leq& C \norm{\mathbf{b}}{L^\infty(O_R)} \cdot R^{1/2} \norm{\partial_r \mathbf{b}}{L^2(O_R)} \leq C R^{1/2} \norm{\nabla \mathbf{b}}{L^2(O_R)}, \label{eq: I5_estimate} \\
    |I_6| \leq& C \norm{\mathbf{b}}{L^\infty(O_R)}^2 \norm{\nabla \mathbf{b}}{L^1(O_R)} \leq C R^{1/2} \norm{\nabla \mathbf{b}}{L^2(O_R)}, \label{eq: I6_estimate}
\end{align}
and by \eqref{eq: Poincare} one has
\begin{align}
    |I_2| \leq& C \norm{(\mathbf{u},\mathbf{b})}{L^\infty(O_R)}^2 \cdot R^{1/2} \norm{u^r}{L^2(O_R)} \leq C R^{1/2} \norm{\nabla \mathbf{u}}{L^2(O_R)}, \label{eq: I2_estimate} \\
    |I_4| \leq& C \norm{\mathbf{u}}{L^\infty(O_R)} \norm{\mathbf{b}}{L^\infty(O_R)} \cdot R^{1/2} \norm{b^r}{L^2(O_R)} \leq C R^{1/2} \norm{\nabla \mathbf{b}}{L^2(O_R)}. \label{eq: I4_estimate}
\end{align}

Substitute \eqref{eq: I3_estimate}--\eqref{eq: I4_estimate} into \eqref{eq: |Du+Db|L2}, and we obtain the following differential inequality:
\begin{equation}
    \label{eq: differential_inequality1}
    \int_{\R^2 \times \T} (|\nabla \mathbf{u}|^2+|\nabla \mathbf{b}|^2) \varphi_R \dif x \leq C R^{1/2} \norm{(\nabla \mathbf{u},\nabla \mathbf{b})}{L^2(O_R)} + C \norm{\nabla \mathbf{u}}{L^2(O_R)}^2.
\end{equation}
Denote
\begin{equation}
    \label{eq: Y(R)}
    Y(R) = \int_{\R^2 \times \T} (|\nabla \mathbf{u}|^2+|\nabla \mathbf{b}|^2) \varphi_R \dif x,
\end{equation}
then a direct computation shows that
\begin{equation*}
    \begin{split}
    Y'(R) =& \int_0^1 \int_0^{2\pi} \bigg( \frac{\mathrm{d}}{\mathrm{d} R} \int_0^{R-1} (|\nabla \mathbf{u}|^2+|\nabla \mathbf{b}|^2) r \dif r + \frac{\mathrm{d}}{\mathrm{d} R} \int_{R-1}^R (|\nabla \mathbf{u}|^2+|\nabla \mathbf{b}|^2) (R-r) r \dif r \bigg) \dif \theta \dif z \\
    =& \int_0^1 \int_0^{2\pi} \bigg( (|\nabla \mathbf{u}(R-1)|^2+|\nabla \mathbf{b}(R-1)|^2) (R-1) \\
    &+ \int_{R-1}^R (|\nabla \mathbf{u}|^2+|\nabla \mathbf{b}|^2) r \dif r - (|\nabla \mathbf{u}(R-1)|^2+|\nabla \mathbf{b}(R-1)|^2) (R-1) \bigg) \dif \theta \dif z \\
    =& \int_0^1 \int_0^{2\pi} \int_{R-1}^R (|\nabla \mathbf{u}|^2+|\nabla \mathbf{b}|^2) r \dif r \dif \theta \dif z = \int_{O_R} (|\nabla \mathbf{u}|^2+|\nabla \mathbf{b}|^2) \dif x.
    \end{split}
\end{equation*}
Therefore \eqref{eq: differential_inequality1} reads
\begin{equation}
    \label{eq: differential_inequality}
    Y(R) \leq C_1 Y'(R) + C_2 R^{1/2} [Y'(R)]^{1/2}.
\end{equation}

If we further assume $\nabla \mathbf{u} \in L^2(\R^2 \times \T)$, then \eqref{eq: differential_inequality1} is reduced to
\begin{equation}
    \label{eq: differential_inequality4}
    Y(R) \leq C R^{1/2} [Y'(R)]^{1/2}.
\end{equation}
Assume that $\nabla \mathbf{u}$ or $\nabla \mathbf{b}$ is not identically zero, then $Y(R)>0$ for $R$ large enough, therefore the above inequality gives
\begin{equation*}
    \frac{1}{CR} \leq \bigg( -\frac{1}{Y(R)} \bigg)'.
\end{equation*}
For $R_0$ large enough and any $R>R_0$, integrate it over $[R_0,R]$, and we obtain
\begin{equation*}
    \frac{1}{C} \ln \frac{R}{R_0} \leq - \frac{1}{Y(R)}+\frac{1}{Y(R_0)} \leq \frac{1}{Y(R_0)},
\end{equation*}
which is obviously wrong for $R$ sufficiently large. Hence $\nabla \mathbf{u}=\nabla \mathbf{b} \equiv 0$. In this way we have proved that if $\nabla \mathbf{u}$ or $\nabla \mathbf{b}$ is not identically equal to zero, then $\lim_{R \to \infty} Y(R)=\infty$.

We shall now finish the proof for Case (i) using this conclusion. According to \eqref{eq: differential_inequality} we have
\begin{align}
    [Y'(R)]^{1/2} \geq& \frac{-C_2 R^{1/2}+\sqrt{C_2^2 R+4 C_1 Y(R)}}{2 C_1} \notag \\
    =& \frac{2Y(R)}{\sqrt{C_2^2 R+4C_1 Y(R)}+C_2 R^{1/2}} \geq \frac{Y(R)}{\sqrt{C_2^2 R+4C_1 Y(R)}}, \label{eq: differential_inequality2}
\end{align}
where the last inequality holds since $C_2 R^{1/2} \leq \sqrt{C_2^2 R+4C_1 Y(R)}$, which implies that
\begin{equation*}
    \frac{2}{\sqrt{C_2^2 R+4C_1 Y(R)}+C_2 R^{1/2}} \geq \frac{1}{\sqrt{C_2^2 R+4C_1 Y(R)}}.
\end{equation*}
Suppose $\nabla \mathbf{u}$ or $\nabla \mathbf{b}$ is not identically zero, then for $R$ large enough, we have $Y(R)>0$, therefore it follows from \eqref{eq: differential_inequality2} that
\begin{equation}
    \label{eq: differential_inequality3}
    [C_2^2 R Y^{-2}(R)+4C_1 Y^{-1}(R)] Y'(R) \geq 1.
\end{equation}
Set $M \geq 4C_2^2$. As we have proved that $\lim_{R \to \infty} Y(R)=\infty$, there exists $R_0$ large enough such that $Y(R_0) \geq M$. For any $R>R_0$, integrating \eqref{eq: differential_inequality3} over $[R,2R]$, we obtain
\begin{equation*}
    2R C_2^2 \bigg[ \frac{1}{Y(R)}-\frac{1}{Y(2R)} \bigg] + 4C_1 \ln \frac{Y(2R)}{Y(R)} \geq R,
\end{equation*}
therefore by noting that $1/Y(R)-1/Y(2R) \leq 1/Y(R) \leq 1/M \leq 1/(4C_2^2)$, we arrive at
\begin{equation*}
    Y(2R)/Y(R) \geq \exp \big( R/(8C_1) \big), \quad \forall R>R_0.
\end{equation*}
In particular, setting $R=8C_1 R_0,16C_1 R_0,\dots$, one can show by induction that
\begin{equation*}
    Y(2^{k+3} C_1 R_0) \geq M \exp \bigg[ \bigg( \sum_{i=1}^k 2^{k-1} \bigg) R_0 \bigg] = M \exp \big[ \big( 2^k-1 \big) R_0 \big], \quad k=1,2,\dots,
\end{equation*}
which suggests that there exists a sequence of $R_k=2^{k+3} C_1 R_0$ $(k=1,2,\dots)$ such that $Y(R_k)$ grows at least exponentially, which contradicts with \eqref{eq: |Du|2+|Db|2<=Ralpha}. Hence $\nabla \mathbf{u}=\nabla \mathbf{b} \equiv 0$, which together with the condition that $u^\theta$ and $b^\theta$ are axisymmetric implies that $\mathbf{u}=(0,0,c_1)$ and $\mathbf{b}=(0,0,c_2)$.

\subsection{Proof for Case (ii) of Theorem \ref{thm: Liouville}}
\label{subsec: case2}
According to Lemma \ref{thm: Bogovskii} and \eqref{eq: urbrmean=0}, there exists $\Psi_{R} \in H_0^1(\widetilde{D}_R;\R^2)$, such that
\begin{equation}
    \label{eq: divV=rur_case2}
    \partial_r \Psi_R^r+\partial_z \Psi_R^z = r u^r \ \text{in} \ \widetilde{D}_R, \quad \text{and} \ \norm{\partial_r \Psi_R}{L^2(\widetilde{D}_R)}+\norm{\partial_z \Psi_R}{L^2(\widetilde{D}_R)} \leq C \norm{r u^r}{L^2(\widetilde{D}_R)},
\end{equation}
where $C$ is independent of $R$ and $\theta$. \eqref{eq: divV=rur_case2}$_2$ combined with \eqref{eq: Poincare} implies that
\begin{align}
    &\norm{\partial_r \Psi_R}{L^2(\D_R)} + \norm{\partial_z \Psi_R}{L^2(\D_R)} \notag \\
    \leq& C \norm{r u^r}{L^2(\D_R)} \leq C R^{1/2} \norm{u^r}{L^2(O_R)} \leq CR^{1/2} \norm{\nabla \mathbf{u}}{L^2(O_R)}, \label{eq: DV_estimate_case2}
\end{align}

By \eqref{eq: divV=rur_case2}$_1$ and the periodicity of $p-p_R$ along the $z$--axis, we have
\begin{align}
    I_3 =& \int_{O_R} (p-p_R) u^r \partial_r \varphi_R \dif x = -\int_0^1 \int_0^{2\pi} \int_{R-1}^R (p-p_R) r u^r \dif r \dif \theta \dif z \notag \\
    =& -\int_0^1 \int_0^{2\pi} \int_{R-1}^R (p-p_R) (\partial_r \Psi_R^r+\partial_z \Psi_R^z) \dif r \dif \theta \dif z \notag \\
    =& \int_0^1 \int_0^{2\pi} \int_{R-1}^R (\partial_r p \Psi_R^r+\partial_z p \Psi_R^z) \dif r \dif \theta \dif z =: J_{31}+J_{32}. \label{eq: I3_case2}
\end{align}
Similar to Section \ref{subsec: case1}, from \eqref{eq: cylindrical}$_1$ and \eqref{eq: cylindrical}$_3$ one has
\begin{align}
    J_{31} =& -\int_0^1 \int_0^{2\pi} \int_{R-1}^R \big( \partial_r u^r \partial_r \Psi_R^r + \partial_z u^r \partial_z \Psi_R^r \big) \dif r \dif \theta \dif z \notag \\
    &+ \int_0^1 \int_0^{2\pi} \int_{R-1}^R \bigg[ \bigg( \frac{1}{r} \partial_r-\frac{1}{r^2} \bigg) u^r + \frac{(u^\theta)^2-(b^\theta)^2}{r} -\frac{2}{r^2} \partial_\theta u^\theta \bigg] \Psi_R^r \dif r \dif \theta \dif z \notag \\
    &- \int_0^1 \int_0^{2\pi} \int_{R-1}^R \big( u^r \partial_r + u^z \partial_z \big) u^r \Psi_R^r \dif r \dif \theta \dif z \notag \\
    &+ \int_0^1 \int_0^{2\pi} \int_{R-1}^R \bigg( b^r \partial_r + \frac{b^\theta}{r} \partial_\theta + b^z \partial_z \bigg) b^r \Psi_R^r \dif r \dif \theta \dif z, \label{eq: dp/drVr_case2} \\
    J_{32} =& -\int_0^1 \int_0^{2\pi} \int_{R-1}^R \bigg( \partial_r u^z \partial_r \Psi_R^z + \partial_z u^z \partial_z \Psi_R^z + \frac{1}{r^2} \partial_\theta u^z \partial_\theta \Psi_R^z \bigg) \dif r \dif \theta \dif z \notag \\
    &- \int_0^1 \int_0^{2\pi} \int_{R-1}^R \bigg( u^r \partial_r + \frac{u^\theta}{r} \partial_\theta + u^z \partial_z - \frac{1}{r} \partial_r \bigg) u^z \Psi_R^z \dif r \dif \theta \dif z \notag \\
    &+ \int_0^1 \int_0^{2\pi} \int_{R-1}^R \bigg( b^r \partial_r + \frac{b^\theta}{r} \partial_\theta + b^z \partial_z \bigg) b^z \Psi_R^z \dif r \dif \theta \dif z. \label{eq: dp/dzVz_case2}
\end{align}
It can be shown from \eqref{eq: DV_estimate_case2} and \eqref{eq: Poincare} that
\begin{align}
    &\bigg| \int_0^1 \int_0^{2\pi} \int_{R-1}^R \frac{2}{r^2} \partial_\theta u^\theta \Psi_R^r \dif r \dif \theta \dif z \bigg| \leq C R^{-3/2} \|r^{-1} \partial_\theta u^\theta\|_{L^2(O_R)} \norm{\Psi_R^r}{L^2(\D_R)} \notag \\
    \leq& C R^{-3/2} \norm{\nabla \mathbf{u}}{L^2(O_R)} \norm{\partial_z \Psi_R^r}{L^2(\D_R)} \leq C R^{-3/2} \norm{\nabla \mathbf{u}}{L^2(O_R)} \cdot R^{1/2} \norm{u^r}{L^2(O_R)} \notag \\
    \leq& C R^{-3/2} \norm{\nabla \mathbf{u}}{L^2(O_R)} \cdot R \norm{u^r}{L^\infty(O_R)} \leq C R^{-1/2} \norm{\nabla \mathbf{u}}{L^2(O_R)}, \label{eq: dp/drVr_case2_2-3}
\end{align}
and the rest terms in the right hand sides of \eqref{eq: dp/drVr_case2} and \eqref{eq: dp/dzVz_case2} can be estimated in exactly the same way as \eqref{eq: dp/drVr} and \eqref{eq: dp/dzVz}, with $\Psi_{R,\theta}$ replaced by $\Psi_R$. Note that we do not need to estimate the terms involving $\partial_\theta u^r,\partial_\theta b^r$ and $\partial_\theta \Psi_R$, as they simply vanish. Consequently, we arrive at
\begin{equation}
    \label{eq: I3_estimate_case2}
    |I_3| \leq C R^{1/2} \norm{(\nabla \mathbf{u},\nabla \mathbf{b})}{L^2(O_R)}.
\end{equation}
It can also be easily seen that $|I_1|,|I_2|$ and $|I_4|$--$|I_6|$ share the same bounds as those of $|I_1|$, $|I_2|$, $|I_4|$--$|I_6|$ in Section \ref{subsec: case1}. Therefore the function $Y(R)$ defined by \eqref{eq: Y(R)} satisfies the inequality \eqref{eq: differential_inequality4} in this case, which implies that $\nabla \mathbf{u}=\nabla \mathbf{b} \equiv 0$ according to the proof in Section \ref{subsec: case1}, and $\mathbf{u}=(0,0,c_1)$, $\mathbf{b}=(0,0,c_2)$ as a consequence of the axis-symmetry of $u^r$ and $b^r$.

\subsection{Proof for Case (iii) of Theorem \ref{thm: Liouville}}
\label{subsec: case3}
Similar to the proof of Lemma \ref{thm: Poincare}, integrate \eqref{eq: divu=divb=0} with respect to $(r,\theta,z)$, and we obtain
\begin{equation}
    \label{eq: urbrmean=0(1)}
    \int_0^1 \int_0^{2\pi} u^r \dif \theta \dif z = \int_0^1 \int_0^{2\pi} b^r \dif \theta \dif z = 0.
\end{equation}
Therefore it can be shown from the Poincar\'e's inequality that:
\begin{equation}
    \label{eq: Poincare_case3}
    \begin{split}
    &\norm{u^r}{L^2(O_R)} \leq C R^{1/2} \norm{u^r}{L^2(\D_R)} \leq C R^{1/2} \|\overline{\nabla} u^r\|_{L^2(\D_R)} \leq C R \norm{\nabla \mathbf{u}}{L^2(O_R)}, \\
    &\text{and similarly}, \quad \norm{b^r}{L^2(O_R)} \leq C R \norm{\nabla \mathbf{b}}{L^2(O_R)}.
    \end{split}
\end{equation}
Besides, according to Lemma \ref{thm: Bogovskii} and \eqref{eq: urbrmean=0(1)}, there exists $\Psi_R \in H_0^1(\D_R;\R^3)$ such that
\begin{gather}
    \label{eq: divV=rur_case3}
    \partial_r \Psi_R^r+\partial_\theta \Psi_R^\theta+\partial_z \Psi_R^z = r u^r \quad \text{in} \ \D_R, \\
    \label{eq: DV_estimate_case3}
    \norm{\partial_r \Psi_R}{L^2(\D_R)} + \norm{\partial_\theta \Psi_R}{L^2(\D_R)} + \norm{\partial_z \Psi_R}{L^2(\D_R)} \leq C \norm{r u^r}{L^2(\D_R)} \leq C R^{1/2} \norm{u^r}{L^2(O_R)},
\end{gather}
where $C$ is independent of $R$ and $\theta$.

Again, according to \eqref{eq: divV=rur_case3} and the periodicity of $p-p_R$ along the $z$--axis, we have
\begin{align}
    I_3 =& \int_{O_R} (p-p_R) u^r \partial_r \varphi_R \dif x = -\int_0^1 \int_0^{2\pi} \int_{R-1}^R (p-p_R) ru^r \dif r \dif \theta \dif z \notag \\
    =& -\int_0^1 \int_0^{2\pi} \int_{R-1}^R (p-p_R) (\partial_r \Psi_R^r+\partial_\theta \Psi_R^\theta+\partial_z \Psi_R^z) \dif r \dif \theta \dif z \notag \\
    =& \int_0^1 \int_0^{2\pi} \int_{R-1}^R (\partial_r p \Psi_R^r+\partial_\theta p \Psi_R^\theta+\partial_z p \Psi_R^z) \dif r \dif \theta \dif z =: K_{31}+K_{32}+K_{33}. \label{eq: I3_case3}
\end{align}
From \eqref{eq: cylindrical} one has
\begin{align}
    K_{31} =& -\int_0^1 \int_0^{2\pi} \int_{R-1}^R \bigg( \partial_r u^r \partial_r \Psi_R^r + \partial_z u^r \partial_z \Psi_R^r + \frac{1}{r^2} \partial_\theta u^r \partial_\theta \Psi_R^r \bigg) \dif r \dif \theta \dif z \notag \\
    &+ \int_0^1 \int_0^{2\pi} \int_{R-1}^R \frac{1}{r} \bigg( \partial_r u^r - \frac{2}{r} \partial_\theta u^\theta \bigg) \Psi_R^r \dif r \dif \theta \dif z \notag \\
    &+ \int_0^1 \int_0^{2\pi} \int_{R-1}^R \frac{(u^\theta)^2-(b^\theta)^2}{r} \Psi_R^r \dif r \dif \theta \dif z - \int_0^1 \int_0^{2\pi} \int_{R-1}^R \frac{1}{r^2} u^r \Psi_R^r \dif r \dif \theta \dif z \notag \\
    &- \int_0^1 \int_0^{2\pi} \int_{R-1}^R \bigg( u^r \partial_r + \frac{u^\theta}{r} \partial_\theta + u^z \partial_z \bigg) u^r \Psi_R^r \dif r \dif \theta \dif z \notag \\
    &+ \int_0^1 \int_0^{2\pi} \int_{R-1}^R \bigg( b^r \partial_r + \frac{b^\theta}{r} \partial_\theta + b^z \partial_z \bigg) b^r \Psi_R^r \dif r \dif \theta \dif z =: \sum_{i=1}^6 K_{31i}, \label{eq: dp/drVr_case3} \\
    K_{32} =& -\int_0^1 \int_0^{2\pi} \int_{R-1}^R \bigg[ r (\partial_r u^\theta \partial_r \Psi_R^\theta+\partial_z u^\theta \partial_z \Psi_R^\theta) + \frac{1}{r} \partial_\theta u^\theta \partial_\theta \Psi_R^\theta \bigg] \dif r \dif \theta \dif z \notag \\
    &- \int_0^1 \int_0^{2\pi} \int_{R-1}^R \bigg( \frac{1}{r} u^\theta - \frac{2}{r} \partial_\theta u^r \bigg) \Psi_R^\theta \dif r \dif \theta \dif z - \int_0^1 \int_0^{2\pi} \int_{R-1}^R \big( u^r u^\theta - b^r b^\theta \big) \Psi_R^\theta \dif r \dif \theta \dif z \notag \\
    &- \int_0^1 \int_0^{2\pi} \int_{R-1}^R r \bigg( u_r \partial_r + \frac{1}{r} u^\theta \partial_\theta + u^z \partial_z \bigg) u^\theta \Psi_R^\theta \dif r \dif \theta \dif z \notag \\
    &+ \int_0^1 \int_0^{2\pi} \int_{R-1}^R r \bigg( b_r \partial_r + \frac{1}{r} b^\theta \partial_\theta + b^z \partial_z \bigg) b^\theta \Psi_R^\theta \dif r \dif \theta \dif z =: \sum_{i=1}^5 K_{32i}, \label{eq: dp/dthetaVtheta_case3} \\
    K_{33} =& -\int_0^1 \int_0^{2\pi} \int_{R-1}^R \bigg( \partial_r u^z \partial_r \Psi_R^z + \partial_z u^z \partial_z \Psi_R^z + \frac{1}{r^2} \partial_\theta u^z \partial_\theta \Psi_R^z \bigg) \dif r \dif \theta \dif z \notag \\
    &+ \int_0^1 \int_0^{2\pi} \int_{R-1}^R \frac{1}{r} \partial_r u^z \Psi_R^z \dif r \dif \theta \dif z - \int_0^1 \int_0^{2\pi} \int_{R-1}^R \bigg( u^r \partial_r + \frac{u^\theta}{r} \partial_\theta + u^z \partial_z \bigg) u^z \Psi_R^z \dif r \dif \theta \dif z \notag \\
    &+ \int_0^1 \int_0^{2\pi} \int_{R-1}^R \bigg( b^r \partial_r + \frac{b^\theta}{r} \partial_\theta + b^z \partial_z \bigg) b^z \Psi_R^z \dif r \dif \theta \dif z =: \sum_{i=1}^4 K_{33i}. \label{eq: dp/dzVz_case3}
\end{align}

It follows from \eqref{eq: DV_estimate_case3} that
\begin{align}
    |K_{311}| \leq& C R^{-1/2} \norm{\nabla \mathbf{u}}{L^2(O_R)} \|\overline{\nabla} \Psi_R^r\|_{L^2(\D_R)} \leq C R^{-1/2} \norm{\nabla \mathbf{u}}{L^2(O_R)} \cdot R^{1/2} \norm{u^r}{L^2(O_R)} \notag \\
    \leq& C R^{1/2} \norm{\nabla \mathbf{u}}{L^2(O_R)} \norm{u^r}{L^\infty(O_R)} \leq C R^{1/2} \norm{\nabla \mathbf{u}}{L^2(O_R)}, \label{eq: dp/drVr_case3_1}
\end{align}
and it holds by \eqref{eq: DV_estimate_case3} and \eqref{eq: Poincare_case3} that
\begin{align}
    |K_{312}| \leq& C R^{-3/2} \norm{\nabla \mathbf{u}}{L^2(O_R)} \norm{\Psi_R^r}{L^2(\D_R)} \leq C R^{-3/2} \norm{\nabla \mathbf{u}}{L^2(O_R)} \norm{\partial_z \Psi_R^r}{L^2(\D_R)} \notag \\
    \leq& C R^{-3/2} \norm{\nabla \mathbf{u}}{L^2(O_R)} \cdot R^{1/2} \norm{u^r}{L^2(O_R)} \leq C R^{-1/2} \norm{\nabla \mathbf{u}}{L^2(O_R)}, \label{eq: dp/drVr_case3_2-1} \\
    |K_{313}| \leq& C R^{-1} \big\|(u^\theta,b^\theta)\big\|_{L^\infty(O_R)}^2 \norm{\Psi_R^r}{L^2(\D_R)} \leq C R^{-1} \big\|(u^\theta,b^\theta)\big\|_{L^\infty(O_R)}^2 \norm{\partial_z \Psi_R^r}{L^2(\D_R)} \notag \\
    \leq& C R^{-1} \big\|(u^\theta,b^\theta)\big\|_{L^\infty(O_R)}^2 \cdot R^{3/2} \norm{\nabla \mathbf{u}}{L^2(O_R)} \leq C R^{1/2} \norm{\nabla \mathbf{u}}{L^2(O_R)}, \label{eq: dp/drVr_case3_2-3} \\
    |K_{314}| \leq& C R^{-5/2} \norm{u^r}{L^2(O_R)} \norm{\Psi_R^r}{L^2(\D_R)} \leq C R^{-3/2} \norm{\nabla \mathbf{u}}{L^2(O_R)} \norm{\partial_z \Psi_R^r}{L^2(\D_R)} \notag \\
    \leq& C R^{-3/2} \norm{\nabla \mathbf{u}}{L^2(O_R)} \cdot R^{1/2} \norm{u^r}{L^2(O_R)} \leq C R^{-1/2} \norm{\nabla \mathbf{u}}{L^2(O_R)}, \label{eq: dp/drVr_case3_2-2}
\end{align}
and
\begin{align}
    |K_{315}| \leq& C \norm{\mathbf{u}}{L^\infty(O_R)} \cdot R^{-1/2} \norm{\nabla \mathbf{u}}{L^2(O_R)} \norm{\Psi_R^r}{L^2(\D_R)} \notag \\
    \leq& C R^{-1/2} \norm{\mathbf{u}}{L^\infty(O_R)} \norm{\nabla \mathbf{u}}{L^2(O_R)} \norm{\partial_z \Psi_R^r}{L^2(\D_R)} \notag \\
    \leq& C R^{-1/2} \norm{\mathbf{u}}{L^\infty(O_R)} \norm{\nabla \mathbf{u}}{L^2(O_R)} \cdot R^{1/2} \norm{u^r}{L^2(O_R)} \leq C R^{1/2} \norm{\nabla \mathbf{u}}{L^2(O_R)}, \label{eq: dp/drVr_case3_3}
\end{align}
and likewise,
\begin{equation}
    \label{eq: dp/drVr_case3_4}
    |K_{316}| \leq C R^{1/2} \norm{\nabla \mathbf{b}}{L^2(O_R)}.
\end{equation}
Combining \eqref{eq: dp/drVr_case3} and \eqref{eq: dp/drVr_case3_1}--\eqref{eq: dp/drVr_case3_4}, we conclude
\begin{equation}
    \label{eq: dp/drVr_estimate_case3}
    |K_{31}| \leq C R^{1/2} \norm{(\nabla \mathbf{u},\nabla \mathbf{b})}{L^2(O_R)}.
\end{equation}

Similarly, one can derive from \eqref{eq: DV_estimate_case3} and \eqref{eq: Poincare_case3} that
\begin{align*}
    |K_{321}| \leq& C R^{1/2} \norm{\nabla \mathbf{u}}{L^2(O_R)} \| \overline{\nabla} \Psi_R^\theta \|_{L^2(\D_R)} \leq C R^{3/2} \norm{\nabla \mathbf{u}}{L^2(O_R)} \norm{u^r}{L^\infty(O_R)}, \\
    |K_{322}| \leq& C \big( R^{-1} \|u^\theta\|_{L^2(\D_R)} + \norm{r^{-1} \partial_\theta u^r}{L^2(\D_R)} \big) \|\Psi_R^\theta\|_{L^2(\D_R)} \\
    \leq& C \big( R^{-1} \|u^\theta\|_{L^\infty(O_R)} + R^{-1/2} \norm{\nabla \mathbf{u}}{L^2(O_R)} \big) \|\partial_z \Psi_R^\theta\|_{L^2(\D_R)} \leq CR^{1/2} \norm{\nabla \mathbf{u}}{L^2(O_R)}, \\
    |K_{323}| \leq& C \norm{(u^r,b^r)}{L^\infty(\D_R)} \|(u^\theta,b^\theta)\|_{L^\infty(\D_R)} \|\Psi_R^\theta\|_{L^2(\D_R)} \\
    \leq& C \norm{(u^r,b^r)}{L^\infty(\D_R)} \|(u^\theta,b^\theta)\|_{L^\infty(\D_R)} \|\partial_z \Psi_R^\theta\|_{L^2(\D_R)} \\
    \leq& C \norm{(u^r,b^r)}{L^\infty(\D_R)} \|(u^\theta,b^\theta)\|_{L^\infty(\D_R)} \cdot R^{1/2} \norm{u^r}{L^2(O_R)} \\
    \leq& C R^{3/2} \norm{(u^r,b^r)}{L^\infty(O_R)} \norm{\nabla \mathbf{u}}{L^2(O_R)}, \\
    |K_{324}| \leq& C R \norm{\mathbf{u}}{L^\infty(\D_R)} \cdot R^{-1/2} \norm{\nabla \mathbf{u}}{L^2(O_R)} \| \Psi_R^\theta \|_{L^2(\D_R)} \leq C R^{3/2} \norm{\nabla \mathbf{u}}{L^2(O_R)} \norm{u^r}{L^\infty(O_R)}, \\
    |K_{325}| \leq& C R \norm{\mathbf{b}}{L^\infty(\D_R)} \cdot R^{-1/2} \norm{\nabla \mathbf{b}}{L^2(O_R)} \| \Psi_R^\theta \|_{L^2(\D_R)} \leq C R^{3/2} \norm{\nabla \mathbf{b}}{L^2(O_R)} \norm{u^r}{L^\infty(O_R)},
\end{align*}
which combined with \eqref{eq: dp/dthetaVtheta_case3} shows that
\begin{equation}
    \label{eq: dp/dthetaVtheta_estimate_case3}
    |K_{32}| \leq C R^{1/2} \norm{(\nabla \mathbf{u},\nabla \mathbf{b})}{L^2(O_R)} \big( 1 + R \norm{(u^r,b^r)}{L^\infty(O_R)} \big).
\end{equation}

Also, it holds that
\begin{gather*}
    |K_{331}| \leq C R^{1/2} \norm{\nabla \mathbf{u}}{L^2(O_R)} \norm{u^r}{L^\infty(O_R)}, \quad |K_{332}| \leq C R^{-1/2} \norm{\nabla \mathbf{u}}{L^2(O_R)} \norm{u^r}{L^\infty(O_R)}, \\
    |K_{333}| \leq C R^{1/2} \norm{\nabla \mathbf{u}}{L^2(O_R)} \norm{u^r}{L^\infty(O_R)}, \quad |K_{334}| \leq C R^{1/2} \norm{\nabla \mathbf{b}}{L^2(O_R)} \norm{u^r}{L^\infty(O_R)},
\end{gather*}
which, combined with \eqref{eq: dp/dzVz_case3}, leads to
\begin{equation}
    \label{eq: dp/dzVz_estimate_case3}
    |K_{33}| \leq C R^{1/2} \norm{(\nabla \mathbf{u},\nabla \mathbf{b})}{L^2(O_R)}.
\end{equation}

Putting \eqref{eq: dp/drVr_estimate_case3}--\eqref{eq: dp/dzVz_estimate_case3} into \eqref{eq: I3_case3} one obtains
\begin{equation}
    \label{eq: I3_estimate_case3}
    |I_3| \leq C R^{1/2} \norm{(\nabla \mathbf{u},\nabla \mathbf{b})}{L^2(O_R)} \big( 1 + R \norm{(u^r,b^r)}{L^\infty(O_R)} \big).
\end{equation}

Furthermore, the terms $I_1,I_5$ and $I_6$ can be estimated in exact the same way as that in Section \ref{subsec: case1}, and by H\"older's inequality one has
\begin{gather}
    \label{eq: I2_estimate_case3}
    |I_2| \leq C \norm{(\mathbf{u},\mathbf{b})}{L^\infty(O_R)}^2 \norm{u^r}{L^1(O_R)} \leq C R \norm{u^r}{L^\infty(O_R)}, \\
    \label{eq: I4_estimate_case3}
    |I_4| \leq C \norm{\mathbf{u}}{L^\infty(O_R)} \norm{\mathbf{b}}{L^\infty(O_R)} \norm{b^r}{L^1(O_R)} \leq C R \norm{b^r}{L^\infty(O_R)}.
\end{gather}

Substitute \eqref{eq: I3_estimate_case3}--\eqref{eq: I4_estimate_case3} and \eqref{eq: I1_estimate}--\eqref{eq: I6_estimate} into \eqref{eq: |Du+Db|L2}, and we obtain
\begin{equation}
    \label{eq: differential_inequality_case3}
    \int_{\R^2 \times \T} (|\nabla \mathbf{u}|^2+|\nabla \mathbf{b}|^2) \varphi_R \dif x \leq C R^{1/2} \norm{(\nabla \mathbf{u},\nabla \mathbf{b})}{L^2(O_R)} \big( 1 + R \norm{(u^r,b^r)}{L^\infty(O_R)} \big).
\end{equation}
Noting that $(R-1) \norm{(u^r,b^r)}{L^\infty(O_R)} \leq \norm{(ru^r,rb^r)}{L^\infty(O_R)} \leq \norm{(ru^r,rb^r)}{L^\infty(\R^2 \times \T)}<\infty$, \eqref{eq: differential_inequality_case3} is reduced to
\begin{equation*}
    \int_{\R^2 \times \T} (|\nabla \mathbf{u}|^2+|\nabla \mathbf{b}|^2) \varphi_R \dif x \leq C R^{1/2} \norm{(\nabla \mathbf{u},\nabla \mathbf{b})}{L^2(O_R)},
\end{equation*}
which is exactly \eqref{eq: differential_inequality4} and implies that $\nabla \mathbf{u}=\nabla \mathbf{b} \equiv 0$. Therefore, $\mathbf{u}$ and $\mathbf{b}$ are constants.

The proof of Theorem \ref{thm: Liouville} is complete.

\vskip 3mm

\noindent{\bf Acknowledgment.} {This work was supported by NSFC grants 12070144 and 12271423.}

\end{document}